\documentclass[10pt]{article}
\usepackage[margin=0.95in]{geometry}
\usepackage{amsmath,amsthm,amssymb,graphicx}
\usepackage{tikz}
\usepackage{pgfplots}
\pgfplotsset{compat=1.18}

\usepackage[dvipsnames]{xcolor}
\usepackage{hyperref}
\hypersetup{colorlinks=true, citecolor=black, linkcolor=black, urlcolor=black, pdfstartview={FitH}}

\newtheorem{theorem}{Theorem}
\newtheorem{lemma}[theorem]{Lemma}

\theoremstyle{remark}
\newtheorem*{remark}{Remark}

\DeclareMathOperator{\area}{area}
\DeclareMathOperator{\conv}{conv}
\DeclareMathOperator{\diam}{diam}

\title{A reduced planar body with area greater than $\pi\Delta^2/4$}
\author{Scott Duke Kominers%
\thanks{%
\textsc{Harvard Business School; Department of Economics and Center of
Mathematical Sciences and Applications, Harvard University; and a16z crypto.}
I used LLMs to assist with computations and coding in the preparation of this article, especially GPT-5.5~Pro and Claude~4.8 Opus (both accessed in part via Poe with the support of Quora, where I am an advisor); in particular, the reduced body $R$ was found by GPT-5.5~Pro via semi-directed search. I especially appreciate a thorough review from Refine.ink. The problem, methods, and eventual written form are my own; and of course any errors remain my responsibility. This work was conducted while I was visiting the Technological Innovation, Entrepreneurship, and Strategic Management (TIES) Group at the MIT Sloan School of Management; I greatly appreciate their hospitality.\smallskip%
\newline\indent%
\emph{2020 Mathematics Subject Classification.} Primary 52A40; Secondary 52A10, 52A38.
\newline\indent%
\emph{Key words and phrases.} Reduced convex body, thickness, minimal width, support function, constant width.
}}

\date{June 26, 2026}

\begin{document}
\maketitle
\vspace{-2.5em}

\begin{abstract}
We construct a reduced planar convex body $R$ with thickness $\Delta(R)=1$ and
\[
 \area(R)=0.786215\ldots>0.785398\ldots=\frac{\pi}{4}.
\]
Thus $R$ is a counterexample to Lassak's conjectured upper bound
$\area\le (\pi/4)\Delta^2$ for planar reduced bodies. The construction is given by an explicit support function, and the proofs use only elementary support-function, width, area, and contact-point computations.
\end{abstract}

\section{Introduction}
A \emph{convex body} is a compact convex subset of $\mathbb R^2$ with non-empty interior. The \emph{support function} of a convex body $C$ is
\[
 h_C(\theta)=\max_{x\in C} \{x\cdot u_\theta\},
 \qquad u_\theta=(\cos\theta,\sin\theta);
\]
the \emph{width of $C$ in direction $u_\theta$} is
\[
 w_C(\theta)=h_C(\theta)+h_C(\theta+\pi);
\]
and $C$'s \emph{thickness} is $\Delta(C)=\min_\theta w_C(\theta)$. A convex body $R$ is said to be \emph{reduced} \cite{Heil1978} if every convex body $K\subsetneq R$ satisfies $\Delta(K)<\Delta(R)$.

P\'al's isominwidth theorem \cite{Pal1921} determines the least area forced by a given thickness, attained by the equilateral triangle. The opposite question --- the greatest area --- is unbounded without a restriction because a convex body of thickness $\Delta$ can enclose arbitrarily large area (as a long, thin rectangle shows). Thus, for the area-maximization question we consider reduced bodies, which are exactly the convex bodies that are minimal for their thickness; since a reduced body of thickness $\Delta$ has diameter at most $\sqrt2\,\Delta$ \cite{Lassak1990}, the greatest area among them is finite. 

Lassak conjectured \cite{Lassak2005} that every planar reduced body $R$ satisfies
\begin{equation}\label{eq:conj}
 \area(R)\le \frac{\pi}{4}\,\Delta(R)^2.
\end{equation}
The bodies conjectured to be extremal are the disk of width $\Delta(R)$ and the quarter-disk of radius $\Delta(R)$, both of which achieve equality in~\eqref{eq:conj}; see \cite[Problem 3]{LassakMartini2011}.

The conjecture~\eqref{eq:conj} holds in its two best-understood cases. For reduced polygons --- regular odd-gons are examples, and the class is surveyed in \cite{LassakMartini2011} --- Lassak proved \eqref{eq:conj} with strict inequality, the value $\pi/4$ being approached only as the number of sides grows and the polygons tend to the disk \cite{Lassak2005}. For bodies of constant width, \eqref{eq:conj} follows from Barbier's theorem and the isoperimetric inequality. A counterexample can therefore be neither a polygon nor of constant width. Thus by the planar structure theory (see Section~\ref{sec:structure}) any counterexample to~\eqref{eq:conj} must combine genuine arcs with straight segments. In this note, we construct such a body, disproving the conjecture.

We exhibit an explicit convex body for which \eqref{eq:conj} fails, based on a deformation of the quarter-disk. In the quarter-disk, the constant-width structure that underlies every reduced body is degenerate: of the two opposite arcs that should together form a curve of constant width $\Delta$, one has collapsed to a single point, i.e., the center of the quarter-circle. Restoring that second curve --- ``blunting'' the quarter-disk by spreading that concentrated curvature onto a genuine arc --- yields a body bounded by two true constant-width arcs and two short segments, whose active arc now sweeps just past a quarter-turn, $\varphi=2m>\pi/2$. The resulting body $R$ is convex, reduced, and of thickness $1$, yet it encloses more area than the disk of width $1$ --- it has $\area(R)=0.786215\ldots>\pi/4$, thus violating \eqref{eq:conj}. The construction and analysis are fully elementary: we give an explicit support function, and establish the properties and area of $R$ via direct computation.

Beyond refuting \eqref{eq:conj}, the example reopens the underlying extremal problem. The maximal area of a planar reduced body of thickness $1$ --- long expected to equal that of the disk, namely $\pi/4$ --- is now known to exceed $\pi/4$, yet its exact value, and whether it can be attained, are open. We make no claim that $R$ is optimal, so $\area(R)$ is only a lower bound for the maximal area.

\section{The construction}\label{sec:construction}
We put
\begin{gather*}
 m=\frac{15}{16},\qquad \varphi=2m=\frac{15}{8},\qquad
 \psi=\pi-2m=\pi-\varphi,\\
 T=\cot m=\tan\frac{\psi}{2},\qquad
 k=\frac12\cot^2 m,
 \qquad \varepsilon=\frac{13}{20},
\end{gather*}
and
\[
 q=\frac{2\sin m}{m+\sin m\cos m}.
\]
Numerically, we have
\[
 T=0.7341755922917413\ldots,\quad
 k=0.2695069001584646\ldots,\quad
 q=1.1397054320523979\ldots.
\]
For $0\le \theta\le \varphi$, we consider
\[
 r(\theta)=k+\varepsilon\bigl(1-q\cos(\theta-m)\bigr),
\]
a constant $k$ plus the \emph{perturbation} $\varepsilon\bigl(1-q\cos(\theta-m)\bigr)=r(\theta)-k$, and set
\[
 v_\theta=(-\sin\theta,\cos\theta),\qquad
 c(\theta)=\int_0^\theta r(t)v_t\,dt.
\]
The value of $q$ is chosen so that $c(\varphi)$ takes the same value as it would for the constant function $r\equiv k$; since $c(\varphi)=\int_0^{2m}r(\theta)v_\theta\,d\theta$, this requires the perturbation to integrate to zero against $v_\theta$, i.e.:
\begin{equation}
 \int_0^{2m}\bigl(r(\theta)-k\bigr)v_\theta\,d\theta=0.
\label{eq:perturbation-integral}
\end{equation}
Identifying $\mathbb R^2$ with $\mathbb C$, so that $v_\theta=ie^{i\theta}$, and substituting $x=\theta-m$, the left-hand side of \eqref{eq:perturbation-integral} becomes 
\[
 \varepsilon\,ie^{im}\!\int_{-m}^{m}(1-q\cos x)e^{ix}\,dx;
\]
the imaginary part of this integral vanishes by oddness. Thus, since $\varepsilon\,ie^{im}\neq0$, the condition~\eqref{eq:perturbation-integral} reduces to
\[
 \int_{-m}^{m}(1-q\cos x)\cos x\,dx
 =2\sin m-q(m+\sin m\cos m)=0,
\]
which holds for our chosen value of $q$. Hence
\[
 c(\varphi)=k\int_0^{2m} v_\theta\,d\theta
 =k(\cos 2m-1,\sin 2m);
\]
since $k=\frac12\cot^2m$, this gives
\[
 c(\varphi)+\frac12u_\varphi=\left(-\frac12,T\right).
\]

We define a $2\pi$-periodic function $h$ by
\[
 h(\theta)=
 \begin{cases}
 c(\theta)\cdot u_\theta+\frac12
      &0\le \theta\le \varphi,\\[2mm]
 a_\varphi\cdot u_\theta
      &\varphi\le \theta\le \pi,\\[2mm]
 -c(\theta-\pi)\cdot u_{\theta-\pi}+\frac12
      &\pi\le \theta\le \pi+\varphi,\\[2mm]
 a_0\cdot u_\theta
      &\pi+\varphi\le \theta\le 2\pi,
 \end{cases}
\]
where
\[
 a_0=\left(\frac12,0\right),\qquad a_\varphi=\left(-\frac12,T\right).
\]
Lemma~\ref{lem:convex} below shows that $h$ is the support function of a convex body; we call that body $R$ in the sequel. A direct check shows the four pieces agree at the breakpoints $\theta=\varphi,\pi,\pi+\varphi,2\pi$, so $h$ is continuous and $2\pi$-periodic: at $\varphi$ this is the relation $a_\varphi=c(\varphi)+\tfrac12u_\varphi$ above; at $\pi$ and $2\pi$ both adjacent pieces equal $\tfrac12$; and at $\pi+\varphi$ both equal $-\tfrac12\cos 2m$, using $c(\varphi)\cdot u_\varphi=\cos^2 m$ (valid since $k=\tfrac12\cot^2m$).

It is useful to name the parts of the boundary. For $0\le \theta\le \varphi$, let
\[
 a(\theta)=c(\theta)+\frac12u_\theta,
 \qquad b(\theta)=c(\theta)-\frac12u_\theta.
\]
Then
\[
 a_0=a(0),\quad b_0=b(0)=\left(-\frac12,0\right),\quad
 a_\varphi=a(\varphi),\quad b_\varphi=b(\varphi)=a_\varphi-u_\varphi.
\]
We call $[0,\varphi]$ and $[\pi,\pi+\varphi]$ the \emph{active} ranges of directions and call the complementary ranges $(\varphi,\pi)$ and $(\pi+\varphi,2\pi)$ the \emph{gaps}: in an active direction the boundary point lies on a curved arc and the width equals the thickness (Lemma~\ref{lem:width}), whereas in a gap $h$ is linear, the supporting line touches at a single corner, and the width exceeds the thickness. The boundary $\partial R$ consists, in positive order, of the \emph{upper active arc} $\Gamma_a=\{a(\theta):0\le \theta\le\varphi\}$, the segment $[a_\varphi,b_0]$, the \emph{lower active arc}
$\Gamma_b=\{b(\theta):0\le \theta\le\varphi\}$, and the segment $[b_\varphi,a_0]$. Figure~\ref{fig:body} shows $R$ together with several thickness chords.

\begin{figure}[t]
\centering
\input{figureR.tikz}
\caption{The body $R$. The dashed segments are the thickness chords $[a(\theta),b(\theta)]$ (each of length $1$) for five equally-spaced values of $\theta\in[0,\varphi]$.}
\label{fig:body}
\end{figure}

\section{Convexity and thickness}
\begin{lemma}\label{lem:convex}
The function $h$ is the support function of a convex body $R$. Moreover, the two active arcs are strictly convex.
\end{lemma}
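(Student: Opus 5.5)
The plan is to use the standard criterion for planar support functions. A continuous $2\pi$-periodic function $h$ that is piecewise $C^2$ is the support function of a convex body if and only if the measure $h+h''$ (in the distributional sense) is nonnegative. Concretely, this means three things: the radius of curvature $\rho=h+h''$ must be $\ge 0$ on each smooth piece; at each breakpoint the jump $h'(\theta^+)-h'(\theta^-)$ must be $\ge 0$; and the body must have nonempty interior. I will check these conditions piece by piece, then identify the boundary points $h(\theta)u_\theta+h'(\theta)v_\theta$.

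\textbf{The linear pieces.} On the gaps, $h=a\cdot u_\theta$ with $a$ fixed, so $h+h''=0$. These pieces contribute only the corners $a_\varphi$ and $a_0$.

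\textbf{The active pieces.} On $[0,\varphi]$, write $h(\theta)=c(\theta)\cdot u_\theta+\tfrac12$. Since $c'(\theta)=r(\theta)v_\theta$ and $v_\theta\cdot u_\theta=0$, we get $h'=c\cdot v_\theta$. Then $h''=r(\theta)-c\cdot u_\theta$, so $h+h''=r(\theta)+\tfrac12$. The boundary point is $h u_\theta+h'v_\theta=c(\theta)+\tfrac12 u_\theta=a(\theta)$, as claimed.

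Likewise, on $[\pi,\pi+\varphi]$ we have $h(\theta)=-c(\theta-\pi)\cdot u_{\theta-\pi}+\tfrac12$. This gives $h+h''=-r(\theta-\pi)+\tfrac12$, and the boundary point is $b(\theta-\pi)$.

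So I need the bound $|r(\theta)|<\tfrac12$ on $[0,\varphi]$; I only need it to be nonstrict for convexity. Strict inequality gives positive radius of curvature, hence strict convexity of both arcs $\Gamma_a,\Gamma_b$. Since $r(\theta)=k+\varepsilon-\varepsilon q\cos(\theta-m)$ and $|\theta-m|\le m$, $r$ ranges over $[k+\varepsilon-\varepsilon q,\;k+\varepsilon-\varepsilon q\cos m]$. Numerically these are roughly $0.2695+0.65-0.7408\approx 0.179$ and $0.9195-0.7408\cdot 0.5918\approx 0.481$. Both values lie in $(0,\tfrac12)$, so $\tfrac12\pm r>0$ throughout.

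\textbf{The breakpoints.} For this step I compute the one-sided derivatives at $\theta=0,\varphi,\pi,\pi+\varphi$.

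At $\theta=\varphi^-$, $h'=c(\varphi)\cdot v_\varphi$. At $\varphi^+$, $h'=a_\varphi\cdot v_\varphi=c(\varphi)\cdot v_\varphi+\tfrac12 u_\varphi\cdot v_\varphi$, which is the same value. So $h$ is $C^1$ there, and similarly at $0$ with $a_0=a(0)$.

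At $\pi$, the left derivative is $a_\varphi\cdot v_\pi=-T$ and the right derivative is $-c(0)\cdot v_0=0$. The jump is $T>0$, so $b_0$ is a genuine corner.

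At $\pi+\varphi$, compare the left value $-c(\varphi)\cdot v_\varphi$ with the right value $a_0\cdot v_{\pi+\varphi}=\tfrac12\sin 2m$. I will compute $c(\varphi)\cdot v_\varphi=k(\cos 2m-1,\sin 2m)\cdot(-\sin2m,\cos2m)=k\sin 2m$. The jump is then $\tfrac12\sin2m+k\sin 2m>0$, since $\sin(15/8)>0$.

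All the jumps are nonnegative, so $h+h''$ is a nonnegative measure and $h$ is a support function. The total mass of this measure is positive, and the widths are positive, so the body has nonempty interior.

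\textbf{Main obstacle.} I expect the only delicate point to be the sign bookkeeping at the corners. The curvature bound itself is a one-line numeric check, and its margin $0.5-0.481$ is not large but is clear.
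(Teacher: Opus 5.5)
Your proposal is correct and follows the paper's proof essentially step for step: the criterion that $h+h''$ be a nonnegative measure, densities $\tfrac12\pm r$ on the active ranges and $0$ on the gaps, $C^1$ matching at $0$ and $\varphi$, positive jumps at $\pi$ and $\pi+\varphi$ (your $(\tfrac12+k)\sin 2m$ equals the paper's $T=\cot m$), and the range $r\in[0.1787,0.4811]\subset(0,\tfrac12)$. Two small slips do not affect the argument: a jump of $h'$ is an atom, i.e.\ the straight edge $[a_\varphi,b_0]$ of length $T$, not a corner at $b_0$; and for nonempty interior it is cleaner to note, as the paper does, that the density is strictly positive on an interval (so $R$ is neither a point nor a segment) than to cite positivity of the widths, which on the gaps is only verified in the next lemma.
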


\begin{proof}
A continuous $2\pi$-periodic function $h$ is the support function of a convex body if and only if the distribution $\mu=h+h''$ is a non-negative measure \cite{Schneider2013}. Where $h$ is twice differentiable this measure has a \emph{density} $h+h''$ with respect to $d\theta$, equal to the radius of curvature of the boundary there. Where $h'$ jumps it has an \emph{atom} --- a point mass whose weight is the jump $h'(\theta+)-h'(\theta-)$ --- marking a straight boundary edge of that length. We compute $\mu$ on the four smooth pieces and at the two jumps.

On $0<\theta<\varphi$ we have $h=c\cdot u+\tfrac12$. As $c'=rv$,
\[
 h'=c\cdot v,
 \qquad h''=r-c\cdot u,
 \qquad h+h''=\frac12+r(\theta).
\]
On $\pi<\theta<\pi+\varphi$, with $t=\theta-\pi$, the same calculation gives
\[
 h+h''=\frac12-r(t).
\]
On the two gaps $h$ has the form $x\cdot u_\theta$ for a fixed point $x$, and hence $h+h''=0$ there.

It remains to check the sign of the two active densities and the atoms. Write
\[
 \alpha=k+\varepsilon,
 \qquad \beta=\varepsilon q,
 \qquad r(\theta)=\alpha-\beta\cos(\theta-m).
\]
Because $\theta-m\in[-m,m]$ and $0<m<\pi/2$, the range of $r$ is
\begin{gather*}
 r_{\min}=\alpha-\beta=0.1786983693244059\ldots,\\
 r_{\max}=\alpha-\beta\cos m=0.4810926519390665\ldots.
\end{gather*}
Thus we have
\[
 \frac12+r(\theta)\in[0.6786983693244059\ldots,
                       0.9810926519390665\ldots]
\]
and
\[
 \frac12-r(\theta)\in[0.01890734806093346\ldots,
                       0.3213016306755941\ldots],
\]
so both densities are strictly positive.\footnote{We certify the inequalities $0<r<\tfrac12$ by exact rational bounds in Appendix~\ref{app:cert}.}

It remains to locate the atoms. At $\theta=\pi$,
\[
 h'(\pi+)-h'(\pi-)=0-(-T)=T>0,
\]
and at $\theta=\pi+\varphi$,
\[
 h'((\pi+\varphi)+)-h'((\pi+\varphi)-)
 =\left(\frac12+k\right)\sin\varphi=T>0.
\]
At the remaining two breakpoints $h'$ is continuous: $h'(\varphi-)=c(\varphi)\cdot v_\varphi=a_\varphi\cdot v_\varphi=h'(\varphi+)$, and $h'(2\pi-)=a_0\cdot v_0=0=c(0)\cdot v_0=h'(0+)$. Hence the only atoms are the two displayed above, at the normals of the straight edges, and both are positive. Therefore $\mu=h+h''$ is a non-negative measure, with strictly positive density on the two active arcs and positive atoms at the two edges, so $h$ is a legitimate support function; the positive density on the arcs shows that $R$ is genuinely two-dimensional. The densities $\frac12+r$ and $\frac12-r$ are the curvature radii of the two active arcs, so both arcs are strictly convex.
\end{proof}

\begin{remark}
The measure computed in Lemma~\ref{lem:convex} reproduces the oriented boundary described at the end of Section~\ref{sec:construction}. At a direction~$\theta$ where $h$ is differentiable, the boundary point of $R$ with outer normal $u_\theta$ is $x(\theta)=h(\theta)u_\theta+h'(\theta)v_\theta$. On an active range $h=c\cdot u_\theta+\tfrac12$ and $h'=c\cdot v_\theta$, so
\[
 x(\theta)=(c\cdot u_\theta)u_\theta+(c\cdot v_\theta)v_\theta+\tfrac12u_\theta=c(\theta)+\tfrac12u_\theta=a(\theta),
\]
and likewise $b(\theta-\pi)$ on $[\pi,\pi+\varphi]$; these trace the two active arcs. On a gap $h=x_0\cdot u_\theta$ for a fixed point $x_0$ (namely $a_\varphi$ on $(\varphi,\pi)$ and $a_0$ on $(\pi+\varphi,2\pi)$), so $x(\theta)\equiv x_0$: the vanishing density collapses the entire normal interval to that one corner, of which the interval is the normal cone. Each atom is a boundary segment perpendicular to its normal of length equal to the jump $T$; the atom at $\theta=\pi$ is the segment $[a_\varphi,b_0]$, and the atom at $\theta=\pi+\varphi$ is $[b_\varphi,a_0]$. Finally, as $\theta$ increases over $[0,2\pi]$ the normal $u_\theta$ turns once counterclockwise and $x(\theta)$ traverses $\partial R$ once in the positive sense: on $\Gamma_a$, $a'(\theta)=(\tfrac12+r)v_\theta$ is a positive multiple of the counterclockwise tangent $v_\theta$, while on $\Gamma_b$ the outer normal is $u_{\theta+\pi}$, so the counterclockwise tangent is $v_{\theta+\pi}=-v_\theta$ and $b'(\theta)=(r-\tfrac12)v_\theta=(\tfrac12-r)v_{\theta+\pi}$ is again a positive multiple of it, precisely because $r<\tfrac12$. This is the orientation used in the area integral~\eqref{eq:area_basic} and in the containment $\partial R\subseteq K$ demonstrated in the proof of Theorem~\ref{thm:reduced}.
\end{remark}

\begin{lemma}\label{lem:width}
The thickness of $R$ is $\Delta(R)=1$. The width equals $1$ exactly on the active ranges
$[0,\varphi]\cup[\pi,\pi+\varphi]$.
\end{lemma}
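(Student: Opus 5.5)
The plan is to compute the width $w(\theta)=h(\theta)+h(\theta+\pi)$ directly from the four-piece formula for $h$. Because $w(\theta+\pi)=w(\theta)$, it is enough to treat $\theta\in[0,\pi]$. This interval splits into the active range $[0,\varphi]$ and the gap $(\varphi,\pi)$. The statement then follows from two facts: $w\equiv1$ on $[0,\varphi]$, and $w>1$ on $(\varphi,\pi)$. Together these give $\Delta(R)=\min w=1$, attained exactly on $[0,\varphi]\cup[\pi,\pi+\varphi]$.

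\emph{Active range.} For $0\le\theta\le\varphi$ we have $\theta+\pi\in[\pi,\pi+\varphi]$. The third piece of $h$ then gives $h(\theta+\pi)=-c(\theta)\cdot u_\theta+\tfrac12$. Adding the first piece $h(\theta)=c(\theta)\cdot u_\theta+\tfrac12$, the $c$-terms cancel and $w(\theta)=1$. This is immediate from the symmetric way $h$ was built.

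\emph{Gap.} For $\varphi<\theta<\pi$ we have $\theta+\pi\in(\pi+\varphi,2\pi)$, so
\[
 w(\theta)=a_\varphi\cdot u_\theta+a_0\cdot u_{\theta+\pi}=(a_\varphi-a_0)\cdot u_\theta=-\cos\theta+T\sin\theta .
\]
Call this function $f$. By continuity of $h$ at the breakpoints, $f(\varphi)=f(\pi)=1$; one can also check this directly using $T=\cot m$ and $\varphi=2m$. Since $f$ is a pure sinusoid $A\cos(\theta-\theta_0)$ with $A=\sqrt{1+T^2}$, and it takes equal values at $\varphi$ and $\pi$, the centre $\theta_0$ must be the midpoint $(\varphi+\pi)/2$. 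Every $\theta\in(\varphi,\pi)$ therefore satisfies $|\theta-\theta_0|<\psi/2<\pi/2$, because $\psi=\pi-\varphi<\pi$. On this range $\cos$ is strictly decreasing in $|\theta-\theta_0|$, so $f(\theta)>f(\varphi)=1$ on the open gap.

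The only step needing any care is this gap estimate, in particular pinning down the phase $\theta_0$. Using the equal endpoint values avoids computing it explicitly. Equivalently, $A=1/\cos(\psi/2)$ and $\theta_0=\pi-\psi/2$, since $T=\tan(\psi/2)$.
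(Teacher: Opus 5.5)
Your proof is correct. The active-range step and the gap formula $w(\theta)=-\cos\theta+T\sin\theta$ are the same as in the paper; the only difference is how you show $w>1$ on the open gap. The paper substitutes $z=\pi-\theta$ and uses the half-angle identity $1-\cos z=\sin z\tan(z/2)$ to factor $w-1=\sin z\bigl(\tan\frac{\psi}{2}-\tan\frac z2\bigr)$. That gives the sign and the exact equality set $\{z=0,\ z=\psi\}$ in one line, with no phase bookkeeping. Your amplitude--phase argument instead explains the shape of the width function: it is a sinusoid symmetric about the gap midpoint $\pi-\psi/2=\frac{\pi}{2}+m$, with maximum $A=1/\cos(\psi/2)=1/\sin m\approx1.2406$ there, matching Figure~\ref{fig:width}.

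One small correction. Equal values at $\varphi$ and $\pi$ determine $\theta_0$ only modulo $\pi$. They show that the midpoint is a critical point of $f$, but not whether it is a maximum or a minimum. So that observation alone does not let you ``avoid computing'' the phase. You close the gap in either of two ways. One is the explicit values $\cos\theta_0=-\cos(\psi/2)$, $\sin\theta_0=\sin(\psi/2)$ that you state at the end. The other is to note that if the midpoint were a minimum, then $f(\varphi)=-A\cos(\psi/2)<0$, contradicting $f(\varphi)=1$.
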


\begin{proof}
Let $w(\theta)=h(\theta)+h(\theta+\pi)$. For $0\le \theta\le \varphi$,
\[
 w(\theta)=\left(c(\theta)\cdot u_\theta+\frac12\right)
 +\left(-c(\theta)\cdot u_\theta+\frac12\right)=1.
\]
By $\pi$-periodicity of the width, the same holds on $[\pi,\pi+\varphi]$.

Now let $\varphi\le \theta\le \pi$ and put $z=\pi-\theta\in[0,\psi]$. Then
\[
 w(\theta)=a_\varphi\cdot u_\theta+a_0\cdot u_{\theta+\pi}
 =-\cos\theta+T\sin\theta
 =\cos z+\tan\frac{\psi}{2}\sin z.
\]
Using $1-\cos z=\sin z\tan(z/2)$, we obtain
\[
 w(\theta)-1=\sin z\left(\tan\frac{\psi}{2}-\tan\frac z2\right)\ge 0,
\]
because $0\le z\le\psi<\pi$. Equality occurs only for $z=0$ or $z=\psi$, i.e., for
$\theta=\pi$ or $\theta=\varphi$. The antipodal gap is identical. Hence, the minimum width --- i.e., the thickness --- is exactly $1$. Figure~\ref{fig:width} plots the resulting width function.
\end{proof}

\begin{figure}[t]
\centering
\begin{tikzpicture}
\definecolor{cBlue}{RGB}{31,119,180}%
\begin{axis}[
  width=0.86\linewidth, height=5cm,
  xlabel={$\theta$}, ylabel={$w(\theta)$},
  xmin=0, xmax=3.14159, ymin=0.985, ymax=1.2756,
  xtick={0,1.87500,3.14159}, xticklabels={$0$,$\varphi$,$\pi$},
  ytick={1.00,1.05,1.10,1.15,1.20,1.25},
  legend pos=north west,
  legend cell align=left,
  legend style={draw=none,fill=none,font=\small},
  tick label style={font=\small}, label style={font=\small},
]
\fill[cBlue,fill opacity=0.14] (axis cs:0,0.985) rectangle (axis cs:1.87500,1.2756);
\fill[cBlue,fill opacity=0.06] (axis cs:1.87500,0.985) rectangle (axis cs:3.14159,1.2756);
\addlegendimage{area legend,fill=cBlue,fill opacity=0.14}
\addlegendentry{active: $w=1$}
\addlegendimage{area legend,fill=cBlue,fill opacity=0.06}
\addlegendentry{gap: $w>1$}
\draw[dashed,black,line width=0.6pt] (axis cs:0,1) -- (axis cs:3.14159,1);
\draw[dotted,black,line width=0.6pt] (axis cs:1.87500,0.985) -- (axis cs:1.87500,1.2756);
\addplot[cBlue,line width=1.3pt] coordinates {(0.00000,1.00000) (1.87500,1.00000) (1.87500,1.00000) (1.88136,1.00465) (1.88773,1.00926) (1.89409,1.01384) (1.90046,1.01837) (1.90682,1.02285) (1.91319,1.02730) (1.91955,1.03171) (1.92592,1.03607) (1.93228,1.04039) (1.93865,1.04467) (1.94501,1.04891) (1.95138,1.05310) (1.95774,1.05726) (1.96411,1.06137) (1.97047,1.06543) (1.97684,1.06946) (1.98320,1.07344) (1.98957,1.07737) (1.99593,1.08127) (2.00230,1.08511) (2.00866,1.08892) (2.01503,1.09268) (2.02139,1.09640) (2.02775,1.10007) (2.03412,1.10370) (2.04048,1.10728) (2.04685,1.11082) (2.05321,1.11431) (2.05958,1.11776) (2.06594,1.12116) (2.07231,1.12452) (2.07867,1.12783) (2.08504,1.13110) (2.09140,1.13432) (2.09777,1.13749) (2.10413,1.14062) (2.11050,1.14370) (2.11686,1.14674) (2.12323,1.14973) (2.12959,1.15267) (2.13596,1.15556) (2.14232,1.15841) (2.14869,1.16122) (2.15505,1.16397) (2.16142,1.16668) (2.16778,1.16934) (2.17414,1.17195) (2.18051,1.17452) (2.18687,1.17704) (2.19324,1.17951) (2.19960,1.18193) (2.20597,1.18431) (2.21233,1.18663) (2.21870,1.18891) (2.22506,1.19114) (2.23143,1.19332) (2.23779,1.19546) (2.24416,1.19754) (2.25052,1.19958) (2.25689,1.20157) (2.26325,1.20351) (2.26962,1.20540) (2.27598,1.20724) (2.28235,1.20904) (2.28871,1.21078) (2.29508,1.21248) (2.30144,1.21412) (2.30781,1.21572) (2.31417,1.21727) (2.32054,1.21877) (2.32690,1.22022) (2.33326,1.22162) (2.33963,1.22297) (2.34599,1.22427) (2.35236,1.22552) (2.35872,1.22672) (2.36509,1.22787) (2.37145,1.22897) (2.37782,1.23002) (2.38418,1.23103) (2.39055,1.23198) (2.39691,1.23288) (2.40328,1.23374) (2.40964,1.23454) (2.41601,1.23529) (2.42237,1.23599) (2.42874,1.23665) (2.43510,1.23725) (2.44147,1.23780) (2.44783,1.23830) (2.45420,1.23875) (2.46056,1.23916) (2.46693,1.23951) (2.47329,1.23981) (2.47965,1.24006) (2.48602,1.24026) (2.49238,1.24041) (2.49875,1.24051) (2.50511,1.24056) (2.51148,1.24056) (2.51784,1.24051) (2.52421,1.24041) (2.53057,1.24026) (2.53694,1.24006) (2.54330,1.23981) (2.54967,1.23951) (2.55603,1.23916) (2.56240,1.23875) (2.56876,1.23830) (2.57513,1.23780) (2.58149,1.23725) (2.58786,1.23665) (2.59422,1.23599) (2.60059,1.23529) (2.60695,1.23454) (2.61332,1.23374) (2.61968,1.23288) (2.62604,1.23198) (2.63241,1.23103) (2.63877,1.23002) (2.64514,1.22897) (2.65150,1.22787) (2.65787,1.22672) (2.66423,1.22552) (2.67060,1.22427) (2.67696,1.22297) (2.68333,1.22162) (2.68969,1.22022) (2.69606,1.21877) (2.70242,1.21727) (2.70879,1.21572) (2.71515,1.21412) (2.72152,1.21248) (2.72788,1.21078) (2.73425,1.20904) (2.74061,1.20724) (2.74698,1.20540) (2.75334,1.20351) (2.75971,1.20157) (2.76607,1.19958) (2.77243,1.19754) (2.77880,1.19546) (2.78516,1.19332) (2.79153,1.19114) (2.79789,1.18891) (2.80426,1.18663) (2.81062,1.18431) (2.81699,1.18193) (2.82335,1.17951) (2.82972,1.17704) (2.83608,1.17452) (2.84245,1.17195) (2.84881,1.16934) (2.85518,1.16668) (2.86154,1.16397) (2.86791,1.16122) (2.87427,1.15841) (2.88064,1.15556) (2.88700,1.15267) (2.89337,1.14973) (2.89973,1.14674) (2.90610,1.14370) (2.91246,1.14062) (2.91883,1.13749) (2.92519,1.13432) (2.93155,1.13110) (2.93792,1.12783) (2.94428,1.12452) (2.95065,1.12116) (2.95701,1.11776) (2.96338,1.11431) (2.96974,1.11082) (2.97611,1.10728) (2.98247,1.10370) (2.98884,1.10007) (2.99520,1.09640) (3.00157,1.09268) (3.00793,1.08892) (3.01430,1.08511) (3.02066,1.08127) (3.02703,1.07737) (3.03339,1.07344) (3.03976,1.06946) (3.04612,1.06543) (3.05249,1.06137) (3.05885,1.05726) (3.06522,1.05310) (3.07158,1.04891) (3.07794,1.04467) (3.08431,1.04039) (3.09067,1.03607) (3.09704,1.03171) (3.10340,1.02730) (3.10977,1.02285) (3.11613,1.01837) (3.12250,1.01384) (3.12886,1.00926) (3.13523,1.00465) (3.14159,1.00000)};
\addlegendentry{$w(\theta)$}
\end{axis}
\end{tikzpicture}
\caption{The width function on $[0,\pi]$: the width is identically $1$ on the active range $[0,\varphi]$ and strictly larger than $1$ on the gap $(\varphi,\pi)$.}
\label{fig:width}
\end{figure}

\section{Area}\label{sec:area}
We compute the area from the boundary line integral
\[
 \area(R)=\frac12\oint_{\partial R} x\times dx,
 \qquad x\times y=x_1y_2-x_2y_1.
\]
Let
\[
 p(\theta)=c(\theta)\cdot u_\theta
 =\int_0^\theta r(s)\sin(\theta-s)\,ds.
\]
Since $a'= (r+\tfrac12)v$ and $b'=(r-\tfrac12)v$, while $c\times v=c\cdot u=p$, we have
\[
 a(\theta)\times a'(\theta)+b(\theta)\times b'(\theta)
 =2r(\theta)p(\theta)+\frac12.
\]
The two straight edges contribute
\[
 a_\varphi\times b_0+b_\varphi\times a_0=\frac12\sin\varphi
\]
to the integral $\oint x\times dx$. Therefore
\begin{equation}\label{eq:area_basic}
 \area(R)=\int_0^\varphi r(\theta)p(\theta)\,d\theta
 +\frac{\varphi}{4}+\frac14\sin\varphi
 =\int_0^{2m} r(\theta)p(\theta)\,d\theta
 +\frac m2+\frac14\sin 2m.
\end{equation}

For the closed form, set again $\alpha=k+\varepsilon$ and $\beta=\varepsilon q$, so that
$r(\theta)=\alpha-\beta\cos(\theta-m)$. The inner integral is
\begin{equation}\label{eq:p_explicit}
 p(\theta)=\alpha(1-\cos\theta)
 -\frac{\beta}{2}\bigl(\theta\sin(\theta-m)+\sin m\sin\theta\bigr).
\end{equation}
Indeed, this is obtained by integrating
$\int_0^\theta\cos(s-m)\sin(\theta-s)\,ds$ after the substitution $x=s-m$.
Set $D(\theta)=\theta\sin(\theta-m)+\sin m\sin\theta$, so that $p=\alpha(1-\cos\theta)-\tfrac{\beta}{2}D$. Expanding $r(\theta)p(\theta)=\bigl(\alpha-\beta\cos(\theta-m)\bigr)p$ and grouping by powers of $\alpha,\beta$,
\[
 r(\theta)p(\theta)=\alpha^2(1-\cos\theta)
 -\alpha\beta\Bigl(\tfrac12 D+\cos(\theta-m)(1-\cos\theta)\Bigr)
 +\tfrac12\beta^2\cos(\theta-m)\,D.
\]
Integrating each coefficient over $[0,2m]$ (each an elementary trigonometric integral) gives 
\[
 \int_0^{2m} r(\theta)p(\theta)\,d\theta=\alpha^2 I_{\alpha^2}+\alpha\beta\,I_{\alpha\beta}+\beta^2 I_{\beta^2},
\]
with
\begin{gather*}
 I_{\alpha^2}=2m-\sin 2m,\qquad
 I_{\alpha\beta}=2m\cos m-\tfrac72\sin m+\tfrac12\sin 3m,\\
 I_{\beta^2}=-\tfrac12 m\cos 2m+\tfrac14 m+\tfrac14\sin 2m-\tfrac1{16}\sin 4m.
\end{gather*}
Here $I_{\alpha\beta}$ carries the minus sign of the $\alpha\beta$-term in the grouping (so it is negative). Adding the remaining term $\tfrac m2+\tfrac14\sin 2m$ of \eqref{eq:area_basic} gives
\begin{align*}
\area(R)=&\;2\alpha^2m-\alpha^2\sin 2m
 +2\alpha\beta m\cos m-\frac72\alpha\beta\sin m
 +\frac12\alpha\beta\sin 3m \\
&-\frac12\beta^2m\cos 2m+\frac14\beta^2m
 +\frac14\beta^2\sin 2m-\frac1{16}\beta^2\sin 4m
 +\frac m2+\frac14\sin 2m;
\end{align*}
as $m=15/16$, $\varepsilon=13/20$, $k=\frac12\cot^2m$, and
$q=2\sin m/(m+\sin m\cos m)$, this yields
\[
 \area(R)=0.786215602719126122986809092744\ldots,
\]
whereas
\[
 \frac{\pi}{4}=0.785398163397448309615660845819\ldots.
\]
Thus
\[
 \area(R)-\frac{\pi}{4}
 =0.000817439321677813371148246924\ldots>0,
\]
and
\[
 \frac{\area(R)}{\pi/4}=1.0010407960697816793\ldots.
\]
The single integral in \eqref{eq:area_basic}, with $p$ given by \eqref{eq:p_explicit}, also provides an immediate independent quadrature check. Both sides of $\area(R)>\pi/4$ are explicit constants.\footnote{In Appendix~\ref{app:cert}, we certify the strict inequality by exact rational bounds: $\area(R)>0.78621>0.78540>\pi/4$.}

\section{Reducedness}
\begin{theorem}\label{thm:reduced}
Every convex body $K$ with $K\subseteq R$ and $\Delta(K)\ge 1$ equals $R$; hence, $R$ is reduced.
\end{theorem}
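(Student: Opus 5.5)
The plan is to show that every boundary point of $R$ must lie in $K$; then $K\supseteq\conv(\partial R)=R$. Let $K\subseteq R$ be convex with $\Delta(K)\ge1$. Then $h_K\le h_R$ everywhere, and in every active direction $\theta\in[0,\varphi]\cup[\pi,\pi+\varphi]$ we have, by Lemma~\ref{lem:width},
\[
1\le w_K(\theta)=h_K(\theta)+h_K(\theta+\pi)\le h_R(\theta)+h_R(\theta+\pi)=1.
\]
Hence $h_K(\theta)=h_R(\theta)$ on the whole active range and on its antipode, i.e., on $[0,\varphi]\cup[\pi,\pi+\varphi]$.

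\textbf{Step 1: the arcs lie in $K$.} For a fixed active $\theta$, the supporting line $\{x\cdot u_\theta=h_R(\theta)\}$ of $R$ also supports $K$. The face of $R$ in this direction is a single point: for $\theta$ in the open active range it is $a(\theta)$ or $b(\theta-\pi)$, because the density of $\mu$ is positive and there is no atom there (Lemma~\ref{lem:convex} and the Remark). Since $K\subseteq R$, the face of $K$ in direction $\theta$ is nonempty and contained in the face of $R$. So $a(\theta)\in K$ for every $\theta\in(0,\varphi)$, and likewise $b(\theta)\in K$. As $K$ is closed, the endpoints $a_0,a_\varphi,b_0,b_\varphi$ also lie in $K$. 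This gives $\Gamma_a\cup\Gamma_b\subseteq K$.

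\textbf{Step 2: the edges lie in $K$.} The segments $[a_\varphi,b_0]$ and $[b_\varphi,a_0]$ have endpoints in $K$, so by convexity they lie in $K$. Thus $\partial R\subseteq K$, using the boundary description of the Remark. Since $R=\conv(\partial R)$, we get $R\subseteq K$, hence $K=R$.

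\textbf{Reducedness.} If $K\subsetneq R$ were a convex body with $\Delta(K)\ge\Delta(R)=1$, the argument above would force $K=R$, a contradiction. Hence $R$ is reduced.

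The only delicate point is the exposed-point claim in Step 1: that the face of $R$ with normal $u_\theta$ is a singleton at each interior active direction. This holds because $h_R$ is $C^1$ there, so the face is $\{h u_\theta+h' v_\theta\}$; equivalently, the atoms of $\mu$ sit only at $\pi$ and $\pi+\varphi$, the normals of the two edges. The endpoints of the arcs need not be exposed in isolation; the closure argument covers them.
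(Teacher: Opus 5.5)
Your proposal is correct and follows essentially the same route as the paper's proof. Both arguments squeeze the widths on the active range to get $h_K=h$ there, then use the singleton support faces at interior active directions to place $a(\theta)$ and $b(\theta)$ in $K$. Both then take closure for the arc endpoints, use convexity for the two edges, and finish with $R=\conv(\partial R)\subseteq K$.
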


\begin{proof}
We fix a convex body $K$ with $K\subseteq R$ and $\Delta(K)\ge 1$, and let $h_K$ be its support function. Since $K\subseteq R$, we have $h_K\le h$ in every direction. Put $w_K(\theta)=h_K(\theta)+h_K(\theta+\pi)$.

\emph{(i)} For $0\le \theta\le\varphi$, Lemma~\ref{lem:width} gives $w_R(\theta)=1$. Hence
\[
 1\le w_K(\theta)=h_K(\theta)+h_K(\theta+\pi)
 \le h(\theta)+h(\theta+\pi)=1.
\]
Therefore $w_K(\theta)=1$. Since both summands are bounded above by the corresponding summands for $R$, equality of the sums forces
\[
 h_K(\theta)=h(\theta),
 \qquad h_K(\theta+\pi)=h(\theta+\pi)
\]
for every $\theta\in[0,\varphi]$.

\emph{(ii)} Fix $\theta\in(0,\varphi)$. On this open active interval $h$ is $C^2$, and Lemma~\ref{lem:convex} gives positive curvature density and no atom at $\theta$; hence $R$ has a unique boundary point with outer normal $u_\theta$, its support face being the singleton
\[
 F_R(u_\theta)=\{h(\theta)u_\theta+h'(\theta)v_\theta\}=\{a(\theta)\}.
\]
Equivalently, the supporting line $\{x:x\cdot u_\theta=h(\theta)\}$ meets $R$ only at $a(\theta)$. Because $K$ is compact and $h_K(\theta)=h(\theta)$, the maximum of $x\cdot u_\theta$ over $K$ is attained and equals $h(\theta)$, so $K$ meets this line as well; as $K\subseteq R$, the contact point can only be $a(\theta)$, whence we have $a(\theta)\in K$. The opposite supporting line, whose unique contact point on $R$ is $b(\theta)$, gives $b(\theta)\in K$ in the same way.

\emph{(iii)} Hence $a(\theta),b(\theta)\in K$ for all $\theta\in(0,\varphi)$. As $K$ is closed, the four endpoints
$a_0,a_\varphi,b_0,b_\varphi$ also lie in~$K$. By convexity,
\[
 [a_\varphi,b_0]\subseteq K,
 \qquad [b_\varphi,a_0]\subseteq K.
\]
Thus, we see that both arcs and both boundary segments of $R$ are contained in $K$.

\emph{(iv)} We have shown that $\partial R\subseteq K$. A convex set containing $\partial R$ contains
$\conv(\partial R)=R$, so we must have $R\subseteq K$. As $K\subseteq R$ by hypothesis, we conclude that $K=R$.
\end{proof}

\section{Relation to the structure theorem and the quarter-disk}\label{sec:structure}
The structure theory of planar reduced bodies says that such a body is the convex hull of the endpoints of its thickness chords, every extreme point being such an endpoint; its boundary is made up of arcs of curves of constant width $\Delta$, occurring in opposite pairs joined by the thickness chords, together with straight segments, and a maximal boundary segment lies opposite a single point on the parallel supporting line (see \cite{Lassak1990,LassakMartini2011} for precise statements). We record that $R$ conforms to this description, which provides both an independent consistency check on Theorem~\ref{thm:reduced} and some degree of an explanation of the construction.

By Lemma~\ref{lem:convex}, the curvature radii of the two active arcs at the two ends of the chord $[a(\theta),b(\theta)]$, that is, in the opposite normal directions $\theta$ and $\theta+\pi$, are
\[
 \rho_a(\theta)=h(\theta)+h''(\theta)=\tfrac12+r(\theta),
 \qquad
 \rho_b(\theta)=h(\theta+\pi)+h''(\theta+\pi)=\tfrac12-r(\theta),
\]
so that
\begin{equation}
 \rho_a(\theta)+\rho_b(\theta)=1=\Delta,
 \qquad 0<\theta<\varphi.
\label{eq:direction-condition}
\end{equation}
For a curve of constant width the radii of curvature at antipodal points sum to the width \cite{Schneider2013}; the identity $\rho_a+\rho_b=1$ is the condition~\eqref{eq:direction-condition} in infinitesimal form, while $h(\theta)+h(\theta+\pi)=1$ on $[0,\varphi]$ (Lemma~\ref{lem:width}) is its global form, so $\Gamma_a$ and $\Gamma_b$ are opposite arcs of a curve of constant width $\Delta$. The two straight edges are each opposite a single point --- the corners $a_0$ and $a_\varphi$, respectively --- and $R=\conv(\Gamma_a\cup\Gamma_b)$. Consistent with the theory, $R$ is not strictly convex: a strictly convex reduced plane body is of constant width \cite{Dekster1986}, and $R$ is not of constant width, so its boundary must contain a segment --- indeed it carries two segments (and two corners).

It is instructive to compare $R$ with the quarter-disk, the conjectured non-circular extremizer. The quarter-disk of radius $\Delta$ is the convex hull of a single circular arc of radius $\Delta$ together with the center of that arc; its single active arc (a quarter circle, with $\rho=\Delta$) is opposite the single center point (with $\rho=0$, understood in the limiting support-measure sense), a \emph{degenerate} constant-width pair $\rho+\rho_{\mathrm{opp}}=\Delta+0=\Delta$, the active arc spanning exactly a quarter turn and the two radii serving as the segments. (The quarter-disk is also the equality case of Lassak's sharp diameter bound $\diam(R)\le\sqrt2\,\Delta(R)$ \cite{Lassak1990}.) The body $R$ is the \emph{generic} member of the same family: the curvature is redistributed off the degenerate center onto a genuine second arc, so that both opposite radii lie strictly between $0$ and $\Delta$, and the active arc runs past a quarter turn, $\varphi=2m\approx 1.875>\pi/2$. By Theorem~\ref{thm:reduced} this deformation preserves reducedness while enclosing slightly more area. Informally, then, $R$ is a \emph{blunted quarter-disk}: the degenerate apex of the quarter-disk --- its center point, where all the opposite curvature is concentrated --- is blunted into a genuine arc, at fixed thickness $1$.

We stress that reducedness is a rigidity property of the supporting lines and carries no implication of maximal area. Among reduced bodies of thickness $1$, the disk and the quarter-disk have area $\pi/4$, whereas every reduced polygon has area strictly below $\pi/4$. The strict inequality $\area(R)>\pi/4$ is therefore a separate, quantitative phenomenon, established by the explicit area computation of Section~\ref{sec:area} and not by the rigidity argument that gives reducedness.

\section{Concluding remarks}
The body $R$ settles the conjecture \eqref{eq:conj} in the negative: it is reduced, has thickness $1$, and encloses area $0.786215\ldots>\pi/4$, an absolute excess of about $8.17\times10^{-4}$ (a relative excess of about $0.1\%$). More importantly, it reopens the underlying extremal problem. Write
\[
 M=\sup\{\area(C): C\text{ reduced},\ \Delta(C)=1\}.
\]
The disk and the quarter-disk give $M\ge\pi/4$, and that was conjectured to be an equality; our example shows that $M>\pi/4$ strictly. The exact value of $M$, whether it is attained, and if so by what body, are open~questions.

We have not explicitly attempted to make the excess large. The parameters $m=15/16$ and $\varepsilon=13/20$ were chosen so that the construction admits a clean closed form, not so as to maximize area. In fact the construction closes for every $m\in(0,\pi/2)$ and every $\varepsilon$ --- the choice $q=q(m)$ together with $k=\tfrac12\cot^2m$ forces $c(\varphi)+\tfrac12u_\varphi=(-\tfrac12,T)$ irrespective of $\varepsilon$ --- while the properties we rely on are open conditions: positivity of the curvature densities $\tfrac12\pm r$ makes $R$ convex, renders each active support face a singleton, and orients the lower arc, so that the reducedness proof of Theorem~\ref{thm:reduced} applies verbatim, together with the strict area excess $\area>\pi/4$. Both hold with room at $(\tfrac{15}{16},\tfrac{13}{20})$, where in fact $0<r<\tfrac12$ (see Appendix~\ref{app:cert}). These conditions therefore persist on a neighborhood of $(\tfrac{15}{16},\tfrac{13}{20})$, giving a whole family of reduced bodies with area above $\pi/4$, whose supremum we have not determined. Thus $\area(R)$ is only a lower bound for $M$, and a numerical or analytic optimization --- over this family and beyond --- would sharpen it and perhaps point to a candidate maximizer.

Several other questions suggest themselves as well. Are the disk and the quarter-disk even \emph{local} maxima of area among reduced bodies, or --- as our perturbation off the quarter-disk suggests --- is neither locally extremal?  Is there an analogous curved perturbation off the disk whose area would also exceed $\pi/4$?  And while the spherical analogue for reduced polygons has been studied by Liu--Chang--Su \cite{LiuChangSu2020}, and related reducedness problems in the hyperbolic plane by S\'agmeister \cite{Sagmeister2024}, the Euclidean bound fails here through an essentially curved mechanism; it is unclear whether that mechanism transfers to the spherical or hyperbolic settings for general reduced bodies, or whether the curved geometry there protects the bound.

\appendix
\section{An exact certificate for the numerical inequalities}\label{app:cert}

Two steps rely on numerical inequalities: the bounds $0<r(\theta)<\tfrac12$, which give the strict positivity of the densities $\tfrac12\pm r(\theta)$ in Lemma~\ref{lem:convex}, and the strict area excess $\area(R)>\pi/4$ of Section~\ref{sec:area}. We certify both exactly without floating-point evaluation. The only transcendental inputs are $S=\sin\tfrac{15}{16}$, $C=\cos\tfrac{15}{16}$, and $\pi$; once these are trapped between explicit rationals, every other quantity is a rational function of $S$ and $C$ with rational coefficients, and each inequality becomes a comparison of rationals.

\smallskip
\noindent\emph{Rational brackets for $S$ and $C$.}  For $x=\tfrac{15}{16}<1$ the Maclaurin series
\[
 \sin x=\sum_{n\ge0}\frac{(-1)^n x^{2n+1}}{(2n+1)!},
 \qquad
 \cos x=\sum_{n\ge0}\frac{(-1)^n x^{2n}}{(2n)!}
\]
are alternating with strictly decreasing terms, so consecutive partial sums bracket the value and the truncation error is at most the first omitted term. Summing through $n=6$ (omitted terms below $10^{-11}$) gives
\[
 S\in[\,0.8060811,\ 0.8060812\,],\qquad
 C\in[\,0.5918050,\ 0.5918051\,],
\]
with each endpoint a rational with denominator $10^{7}$.

\smallskip
\noindent\emph{Reduction to $S$ and $C$.}  Using $\sin2m=2SC$, $\cos2m=1-2S^2$, $\sin3m=S(3-4S^2)$ and $\sin4m=4SC(1-2S^2)$, the constants
\[
 k=\frac{C^2}{2S^2},\qquad q=\frac{2S}{m+SC},\qquad \alpha=k+\varepsilon,\qquad \beta=\varepsilon q,
\]
and the closed form for $\area(R)$ are rational functions of $S$ and $C$ over $\mathbb Q$ (with $m=\tfrac{15}{16}$ and $\varepsilon=\tfrac{13}{20}$). The denominators $S^2$ and $m+SC$ are bounded away from $0$ by the brackets, so substituting the brackets and evaluating by interval arithmetic over $\mathbb Q$ is valid.

\smallskip
\noindent\emph{Certified enclosures.}  For $\theta\in[0,2m]$ the argument $\theta-m$ lies in $[-m,m]$, so $\cos(\theta-m)\in[\cos m,1]$; since $\beta>0$, the radius $r(\theta)=\alpha-\beta\cos(\theta-m)$ attains its minimum $r_{\min}=\alpha-\beta$ at $\theta=m$ and its maximum $r_{\max}=\alpha-\beta\cos m$ at $\theta=0,2m$, and lies between these throughout. Interval evaluation of the two endpoints $r_{\min},r_{\max}$ gives
\[
 r(\theta)\in[\,0.17869,\ 0.48110\,]\subset\bigl(0,\tfrac12\bigr),
 \qquad\text{so }\ \tfrac12+r>0\ \text{ and }\ \tfrac12-r\ge 0.0189>0,
\]
and the closed form evaluates to
\[
 \area(R)\in[\,0.78621,\ 0.78622\,].
\]
Finally, Machin's formula $\tfrac\pi4=4\arctan\tfrac15-\arctan\tfrac1{239}$, using the alternating arctangent series with an upper bound for $\arctan\tfrac15$ through the $x^9/9$ term and a lower bound for $\arctan\tfrac1{239}$ through the $-x^3/3$ term, gives $\pi/4<0.78540$ (the omitted tails are below $10^{-8}$). Hence we have
\[
 \area(R)\ \ge\ 0.78621\ >\ 0.78540\ >\ \frac{\pi}{4},
\]
the required strict inequality. Every decimal endpoint displayed above is an exact rational (a terminating decimal), and every step is a comparison of explicit rationals; the script \texttt{certificate\_check.py}, provided as an ancillary file, carries out this exact rational interval arithmetic and verifies all of the claimed enclosures.


\begin{thebibliography}{1}

\bibitem{Dekster1986}
Boris~V. Dekster.
\newblock Reduced, strictly convex plane figure is of constant width.
\newblock {\em Journal of Geometry}, 26(1):77--81, 1986.

\bibitem{Heil1978}
E.~Heil.
\newblock {K}leinste konvexe {K}\"{o}rper gegebener {D}icke.
\newblock Preprint No.~453, Fachbereich Mathematik der TH Darmstadt, 1978.

\bibitem{Lassak1990}
Marek Lassak.
\newblock Reduced convex bodies in the plane.
\newblock {\em Israel Journal of Mathematics}, 70(3):365--379, 1990.

\bibitem{Lassak2005}
Marek Lassak.
\newblock Area of reduced polygons.
\newblock {\em Publicationes Mathematicae Debrecen}, 67(3-4):349--354, 2005.

\bibitem{LassakMartini2011}
Marek Lassak and Horst Martini.
\newblock Reduced convex bodies in {E}uclidean space---a survey.
\newblock {\em Expositiones Mathematicae}, 29(2):204--219, 2011.

\bibitem{LiuChangSu2020}
Cen Liu, Yanxun Chang, and Zhanjun Su.
\newblock The area of reduced spherical polygons.
\newblock {\em Rocky Mountain Journal of Mathematics}, 52(2):599--607, 2022.

\bibitem{Pal1921}
Julius P\'{a}l.
\newblock {E}in {M}inimumproblem f\"{u}r {O}vale.
\newblock {\em Mathematische Annalen}, 83(3):311--319, 1921.

\bibitem{Sagmeister2024}
{\'A}d{\'a}m Sagmeister.
\newblock On the perimeter, diameter and circumradius of ordinary hyperbolic
  reduced polygons.
\newblock {\em Canadian Mathematical Bulletin}, 68(3):950--965, 2025.

\bibitem{Schneider2013}
Rolf Schneider.
\newblock {\em Convex Bodies: The {B}runn--{M}inkowski Theory}, volume 151 of
  {\em Encyclopedia of Mathematics and its Applications}.
\newblock Cambridge University Press, Cambridge, 2nd edition, 2013.

\end{thebibliography}
\end{document}